\DeclareMathOperator{\nullspace}{null}
\newtheorem{theorem}{Theorem}
\newtheorem{lemma}{Lemma}
\newtheorem{corollary}{Corollary}
\newtheorem{prop}{Proposition}
\renewcommand*\env@matrix[1][*\c@MaxMatrixCols c]{%
  \hskip -\arraycolsep
  \let\@ifnextchar\new@ifnextchar
  \array{#1}}
\begin{document}

\title
{A Gauche Perspective  on \\ Row Reduced Echelon Form
  \\     and Its Uniqueness
}
\date{\vspace{-5ex}}
\markright{ \, Gauche Perspective on  Echelon Form }

\author{Eric L. Grinberg }


\maketitle



\begin{center}
\emph{Dedicated to the memory of my father, Ozias ``Ozi" Grimberg}
\end{center}

\begin{abstract}
Using a left-to-right ``sweeping" algorithm, we define the \emph{Gauche basis} for the column space of a matrix $M$. Interpreting the row reduced echelon form (RREF) of $M$ by \emph{Gauche } means gives a direct proof of its uniqueness. A corollary shows that the (right) null space of $M$ determines its row equivalence class, unmasks a sanitized version of the assertion ``if two  systems are solution equivalent they are row equivalent," and presents the null space as a distinguished graph.  We conclude with pedagogical reflections. 
\end{abstract}

\section{Introduction.}

The row reduced echelon form of a matrix $M$,  RREF$(M)$,  is a useful  tool when working with linear systems \cite{Beez14, lay, Stra14, Stra18}; its uniqueness  is an important property. A survey of papers and textbooks yields a variety of uniqueness proofs. Some are simpler \cite{Yust84} and shorter than others. Generally proofs  begin with two candidates for RREF$(M)$ and conclude that these are equal. It is deemed desirable to have a direct proof, one that simply identifies every atom and molecule of RREF$(M)$ in terms of properties of $M$ and standard conventions. We use the \emph{Gauche basis} of the column space of $M$ to give such a proof,  taking the opportunity to view RREF from a shifted perspective. This context makes it convenient to observe that the (right) null space of $M$ determines its row space, without introducing orthogonality, and yields a near-converse of the familiar assertion ``if two systems are row equivalent then they are solution equivalent." In conclusion we offer some reflections on teaching.

\section{Conventions and  Notations.}
We will work mostly in the vector space $\mathbb F^p$, consisting of $p \times 1$ column vectors with entries in the field $\mathbb F$, and sometimes denote these as transposed row vectors, e.g., 
\(
\begin{pmatrix} 0 \; 1 \cdots 0 \end{pmatrix}^t \). We'll adhere to the ordering conventions of \emph{left to right} and \emph{up to down}. Thus the first column of a matrix is the leftmost, and first entry of a column is its top entry.
 Recall the notation for the ``canonical" or ``standard" basis of $\mathbb F^p$: $\{ \vec e_j \}$ , where $\vec e_j$ stands for the $p \times 1$ column vector
{\small  \( \begin{pmatrix}
 0 & \cdots & 0 & 1 & 0 & \cdots & 0
 \end{pmatrix}^t \) }, 
  with zeros throughout, except for a $1$ in the $j$th entry. 
\smallskip
Recall also that the \emph{span} of a set $S$ of vectors in $\mathbb F^p$ is the collection of all linear combinations of these vectors. Thus the span of the singelton set $\{ \vec v \} $ consists of the set of all scalar multiples of $\vec v$, i.e., a line in $\mathbb F^p$, unless $\vec v= \vec 0$, in which case the span of $\{ \vec v \} $ is $\{ \vec 0 \} $. We also have the degenerate case where $S$ is the empty set; by convention, the span of the empty set is $\{ \vec 0 \} $.

\section{The Remembrance of Row Reduced Echelon Form (RREF).}

Given a matrix $M$, viewed as the coefficient portion of a linear system $M \vec x =\vec b$, we can apply \emph{row operations} to $M$, or to the augmented matrix $( M \vert \vec b )$, and corresponding  equation operations on the system $M \vec x  = \vec b$, to yield a simpler system that is \emph{solution equivalent} to the original. These operations include \emph{scaling a row} by a nonzero scalar, \emph{interchanging} two rows, and \emph{subtracting a scalar multiple of one row from another} row. This last operation is the most commonly used, and is sometimes called a \emph{workhorse} row operation.
\smallskip

Starting with a matrix $M$ and applying carefully chosen row operations, one can obtain  a matrix $E$ with, arguably, the ``best possible" form among all matrices row equivalent to $M$. This is the \emph{row reduced echelon form} of $M$, or RREF$(M)$, or just RREF. We use the definite article \emph{the} because this form turns out to be be unique, as we'll see. 
\smallskip

A matrix $E$ is in RREF if it satisfies the following conditions.

\begin{itemize}
\item \emph{\bf Pivots.}  Sweeping each row of $E$ from the left, the first nonzero scalar encountered, if any, is a~$1$. \\
 We call this entry, along with its column, a \emph{pivot}.
\item \emph{\bf Pivot column insecurity.} 
In a pivot column, the scalar $1$ encountered in the row sweep is the only nonzero entry in its column. 
\item \emph{\bf Downright conventional.}  
If a pivot scalar $1$ is to the right of another, it is also lower down.
\item \emph{\bf Bottom zeros.}
 Rows consisting entirely of zeros, if they appear, are at the bottom of the matrix.
\end{itemize}

Although the RREF conditions may seem labored, a more fluent geometric interpretation will be given below. The term \emph{pivot insecurity} requires explanation. We think of the pivot scalar $1$'s as insecure: they don't want competition from other nonzero entries along their column. Sorry pivots---row insecurity cannot be accommodated.

\section{A Gauche Basis for a Matrix with A Fifth Column.}
For the purpose of introduction and illustration we'll begin with a specific matrix \cite[SAE]{beez20}:
\[
T \equiv \begin{pmatrix}[rrrrr]
2 & 1 & 7 & -7 & 2\\
-3 & 4 &  -5 & -6 &  3\\
1 & 1 & 4 &  -5 & 2
\end{pmatrix}.
\]
We will ``sweep" the columns of $T$ from left to right, and designate each column as a \emph{keeper} or as \emph{subordinate}. These are  meant to be value-neutral, not value judgments, and we hope that no vectors will take offense.  For each column we ask
\begin{gather*}
 \tag{LLQ}\label{eq:LLQ}
 \parbox{25em}{ \begin{center}
\textrm{\emph{ Can we present this column}} 
\newline 
\textrm{\emph{as a linear combination of keeper columns to its left?}}  \end{center} }
\end{gather*}

We will call this the \emph{left-leaning question}, or \emph{LLQ} for short. Columns for which the answer is \emph{no} will be designated as \emph{keepers} and the rest as \emph{subordinates}.

When focusing on the first column of $T$, we recall the convention that a linear combination of the empty set is, in the context of a vector space $V$,  the zero vector of $V$. Thus the $LLQ$ for the first column of $T$ is tantamount to asking: 

\begin{quoting} \centering
\emph{Is this vector nonzero?}
\end{quoting}

\noindent
For $T$ the answer is \emph{yes}. Therefore, we adorn column one with the adjective \emph{keeper}. With the aim of responsible accounting, we ``journal" our action with the vector \( \vec J_1 \equiv \vec e_1 \). (Recall that in our context $\vec e_1$ is the $3 \times 1$ column vector with a $1$ in the first entry and zeros elsewhere.) 

Next, we focus on column two and the $LLQ$, which, in the current context, asks:

\begin{quoting} \centering
\emph{Is this column a scalar multiple of column one?}
\end{quoting}

\noindent
The answer is \emph{no}, so column two is a keeper, and we journal it with $\vec J_2 \equiv \vec e_2$.

 The $LLQ$ for third column asks if this column is a linear combination of the first two (keeper) columns of $T$; by inspection, column three is presentable as a linear combination  of columns one and two, with scalings $3,1$, respectively. So column three is \emph{subordinate} and we journal our action with the vector 
\( \vec J_3 \equiv  3 \vec e_1 + 1 \vec e_2 \),
 which encodes the manifestation of this vector as a linear combination of keeper columns to its left:
 \[
 \begin{pmatrix}[r] 
 7 \\ -5 \\ 4
 \end{pmatrix}
 =
 3 \cdot \begin{pmatrix}[r] 
 2 \\ -3 \\ 1
 \end{pmatrix}
 +
 1 \cdot \begin{pmatrix}[r] 
 1 \\ 4 \\ 1
 \end{pmatrix}
 \, ; \quad
 \vec J_3 \equiv
 \begin{pmatrix}[r] 
 3 \\ 1 \\ 0
 \end{pmatrix}.
 \]

Similarly, the fourth column of $T$ is subordinate, and journaled with 
\[ 
\vec J_4 \equiv {(-2) \cdot \vec e_1 + (-3) \vec e_2 }.
 \]
 The fifth and final column vector of $T$ is not presentable as a linear combination of previous keepers. The reader is invited to prove this or, alternatively, perform a half-turn on the solution box below. \newline

\begin{center}
\rotatebox{180}{
\framebox{
\scalebox{.8}{
\begin{minipage}[c]{0.8\textwidth}
Take $a$ times the first column of $T$ and add it to $b$ times the second column, and look at the top and bottom entries. 
To produce the fifth column of $T$, we need $2a+b=2$ and also $a+b=2$. This implies that $a=0$,
and then we run into trouble with the middle entries of our vectors.
\end{minipage}
}} 
} \end{center}
\smallskip

We declare the fifth column a keeper, at our peril,\footnote{Fifth column--\emph{a group of secret sympathizers or supporters of an enemy that engage in espionage or sabotage within defense lines or national borders}--Merriam-Webster dictionary.} 
and journal it with 
\( \vec J_5 \equiv  \vec e_3 \).
Now form a $3 \times 5$ matrix using the vectors we journaled, in the order we journaled them: 
\(  J \equiv \begin{pmatrix} \vec J_1   & \vec J_2 & \cdots & \vec J_5 \end{pmatrix}  \), or
\begin{equation}
\label{eq:rref_of_5th_column_matrix}
J \equiv 
\begin{pmatrix}[rrrrr]
1 & 0 & 3 &  -2 & 0\\
0 & 1 & 1 & -3 & 0\\
0 & 0 &  0 & 0 & 1
\end{pmatrix}. 
\end{equation}
This turns out to be the $RREF$ of $T$, perhaps surprisingly. For an independent verification, using Gauss--Jordan elimination on the same matrix $T$, see  Example SAE in \cite{beez20}. Notice that our procedure does not show that $(1)$ is row equivalent to $T$, whereas the Gauss--Jordan algorithm, e.g., as in \cite{beez20}, does. It's not difficult  to show directly, in this context, that the Gauche procedure yields a matrix that is row equivalent to the original. In case anyone insists, we will prove this later on; the approach is entirely Gauss--Jordan-esque. 
 
 \section{Beyond the Fifth Column: a General Gauche Algorithm.}

Here we detail the procedure for generating the Gauche basis for an arbitrary matrix and use it to produce the corresponding RREF. For student readers, we suggest following the  ideas  of John H.~Hubbard and Bill Thurston in \emph{How To Read Mathematics}, \cite{Hubb02}:  jump to the illustrative concrete example above, whenever a point in the general procedure below appears 
sinister.
\smallskip

Let $M$ be a $ p \times q$ matrix (over a fixed field $\mathbb F$). We outline a general algorithm that transforms $M$ into row reduced echelon form without invoking row reduction. This exhibits, among other things, the uniqueness of the row reduced echelon form.
\smallskip
Sweeping the columns of $M$ from left to right, we will adorn some of the columns with the title of \emph{keeper}. Initially, the set of keepers is empty. Going from left to right, we take a column of $M$ and ask the \ref{eq:LLQ}. For the first column of $M$ this is tantamount to asking:
\emph{ Is this column nonzero?}
If so, we declare it a \emph{keeper}  and journal our action with the vector $\vec J_1 \equiv \vec e_1 \in \mathbb F^p $. If the first column is zero, we do not adorn it with the title of keeper; we call it \emph{subordinate} and we journal our action with the vector $\vec J_1 \equiv \vec 0$.  

In general, we examine the $n$th column of $M$ and ask the \ref{eq:LLQ}. If this column is not in the span of the current keeper set, we adorn this column with the \emph{keeper} designation and journal our action with the vector $\vec J_n \equiv \vec e_{\ell +1} $, where $\ell + 1$ is the number of keepers adorned up to this step, current column included. If the current column is presentable as a linear combination of (already designated) keepers, say
$\alpha_1 \vec k_1 + \cdots + \alpha_\ell \vec k_\ell$, where the already designated keeper columns are 
\( \{\vec k_i \}_{i=1}^\ell$, then  we call the current column \emph{subordinate} and journal our action with the vector $\vec J_n \equiv \alpha_1 \vec e_1 + \cdots + \alpha_\ell \vec e_\ell$, recalling that we are focusing on column $n$ and we have $\ell$ keeper columns already designated. The careful  (or fussy) reader may object that the current column may be expressible as a linear combination of keepers in more than one way. However, induction readily shows that at each stage the keeper set is linearly independent.
\smallskip
At the end of this procedure we obtain a matrix $E$ of the same size as $M$.

We will call the algorithm above, transforming $M$ into $E$, the \emph{Gauche procedure} and the resulting basis for the column space of $M$ the \emph{Gauche basis}.

\begin{lemma} The matrix $E$  is in row reduced echelon form.
\end{lemma}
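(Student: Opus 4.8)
The plan is to read the structure of $E$ straight off the Gauche bookkeeping and then check the four defining conditions one at a time. First I would fix notation: let $c_1 < c_2 < \cdots < c_r$ denote the positions of the keeper columns encountered during the sweep, and set $c_0 = 0$, $c_{r+1} = q+1$ for convenience. When the sweep reaches column $c_i$, exactly $i-1$ keepers have been designated, so column $c_i$ becomes the $i$th keeper and is journaled with $\vec J_{c_i} \equiv \vec e_i$; hence the $c_i$th column of $E$ is precisely $\vec e_i$. Any column $n$ with $c_i < n < c_{i+1}$ is subordinate, and at the moment it is examined exactly $i$ keepers have been designated, so $\vec J_n \equiv \alpha_1 \vec e_1 + \cdots + \alpha_i \vec e_i$ is supported on coordinates $1, \ldots, i$ (the coefficients being well defined because, as already noted in the excerpt, the keeper set is linearly independent at every stage). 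The key consequence is that every column of $E$ lying to the left of column $c_i$ — keeper or subordinate — has all entries zero in rows $i, i+1, \ldots, p$.

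With this structure in hand the four conditions fall out. For \emph{Downright conventional}: the pivots are the $1$'s in positions $(i, c_i)$ for $i = 1, \ldots, r$, and since $c_1 < c_2 < \cdots < c_r$, a pivot farther to the right sits farther down. For \emph{Pivot column insecurity}: the pivot columns are exactly the columns $c_i$, each equal to $\vec e_i$, so the pivot $1$ is the unique nonzero entry in its column. For \emph{Pivots}: in row $i$ with $i \le r$, every entry in columns $1, \ldots, c_i - 1$ vanishes by the support observation above, while the entry in column $c_i$ is $1$; hence the first nonzero scalar swept in row $i$ is a $1$, and rows below row $r$ contribute no pivots. For \emph{Bottom zeros}: when $i > r$, every journal vector — $\vec e_j$ with $j \le r$ for keepers, a combination of $\vec e_1, \ldots, \vec e_j$ with $j \le r$ for subordinates — has a zero in coordinate $i$, so rows $r+1, \ldots, p$ of $E$ are entirely zero and occupy the bottom.

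The argument is essentially all bookkeeping, so I do not anticipate a genuine obstacle. The one point to handle with care is the indexing claim that the $i$th keeper column is journaled with exactly $\vec e_i$, together with its corollary that every earlier column has zero entries in rows $\ge i$: this single observation simultaneously supplies the ``first nonzero entry of the row is the pivot'' half and the ``pivot stands alone in its column'' half of the echelon conditions, so it is worth stating cleanly before invoking it. I would also add a short remark, to forestall confusion, that this lemma asserts only that $E$ has the RREF \emph{shape}; the separate fact that $E$ is row equivalent to $M$ is deferred, as promised, to a later section.
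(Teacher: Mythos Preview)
Your proposal is correct and follows essentially the same approach as the paper: both arguments read off from the Gauche bookkeeping that the $i$th keeper column is $\vec e_i$ and that every subordinate column between the $i$th and $(i{+}1)$st keepers is supported on coordinates $1,\ldots,i$, then verify the four RREF conditions from this structural fact. Your version is more explicitly organized (introducing the indices $c_1<\cdots<c_r$ and stating the support observation once before invoking it), whereas the paper's proof makes the same points more conversationally, but the underlying reasoning is identical.
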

\begin{proof}

In this discussion we will sometimes tacitly identify columns of $E$ with corresponding columns of~$M$.
We take row $i$ of $E$ and ``sweep" it from the left. We encounter a first  nonzero entry in only one circumstance: where we meet a pivot of $E$, i.e.,  a journaled vector $\vec J_\ell $ corresponding to keeper column of $M$. (A nonzero entry in a subordinate column is always assigned only \emph{after} a pivot $1$ entry has already been assigned earlier in the same row.)
 In the Gauche algorithm, whenever we introduce a new journal vector $\vec J_\ell$, corresponding to a pivot, the scalar $1$ appears in a lower slot than those of any prior keepers, and prior subordinate columns are linear combinations of prior keepers, so their entries are zero at this row altitude level as well. 

What about the \emph{downright} condition? If a pivot $1$ is to the right of another, it is also lower down, as it gets adorned with the \emph{keeper} designation at a later stage and is journaled as $\vec e_k$ with a larger value of $k$.

When the pivot journaling stops, no further nonzero entries are journaled in  rows lower than the row of the $1$ entry in the last pivot column. Hence, in particular, all pure-zero rows are at the bottom of $E$.  Thus we have verified that $E$ is in  RREF.
\end{proof}

 \section{Zeroing in on the null space.}
 We now try to redouble our understanding of the meaning of RREF and its relation to the null space of a matrix.
 
 When solving the linear system \mbox{$M \vec x = \vec b$} by row reduction, the matrix $M$ ``calls the shots" and the right-hand side $\vec b$ ``comes along for the ride." That is, the row reduction steps are determined entirely by the coefficient matrix alone, and they are\emph{ applied} to the right-hand side vector. This suggests that RREF is not concerned much with the right-hand side $\vec b$, so we focus on the homogeneous system $M \vec x = \vec 0$, i.e., the \emph{null space}  $\nullspace (M)$.
 
 There is an additional way in which $\nullspace (M)$ figures into our discussion. The Gauche algorithm includes steps that may be called \emph{decisional}: we must decide if a column is a keeper or is subordinate. A close re-reading shows that these decisions may be reinterpreted entirely in terms of the null space of $M$. 
 
 We are led to ponder the question: \emph{Are RREF and Gauche all about $\nullspace (M)$?} Below we will show that this is indeed so: $\nullspace (M)$ determines ``everything." Moreover, we aim to prove this with ``no work at all," somewhat in the spirit of Donald \mbox{J.~Newman's} 1990s \emph{Thought Less (or thoughtless) Mathematics} initiative. Newman sought to systematize a procedure for solving mathematical problems and proving theorems with no ingenuity required at all. The author recalls a colloquium talk delivered by Newman at Temple University, where he gave a \emph{thought less proof} of the infinitude of the primes. A recently published proof, by I.~Mercer (see \cite{Mercer2020}) is reminiscent of Newman's proof. Alas, not much of Newman's thought less initiative is in the literature. But there is this: \cite{Newman93}. Of course, it goes without saying that setting up a thought less proof is not a thought less undertaking.
 
  To further the re-interpretation of RREF plan, we proceed by setting up a small ``dictionary" between linear properties of columns and inclusion properties of the null space. For the benefit of student readers, we point out that small dictionaries are not uncommon in mathematics. (When they get larger, they  turn into \emph{categories} \cite{Maclane97}.)  For instance, in \cite[p. 11]{G-S_90},  we find: \emph{one can set up a ``dictionary" that translates properties of the matrix into optical properties}. \\
 
 After these anticipatory remarks and before implementing proofs we need to add to our notational baggage. 
 In working with columns of $M$ (and of  $E$) we used $\{ \vec e_i \}_{i=1}^p$,  the standard basis of $\mathbb F^p$. Now null$(M)$ is a subset of $\mathbb F^q$ and we'd like to work with the standard basis of this space as well. To avoid (read: reduce) confusion, we'll use the notation $\vec f_i$ for the $q \times 1$ column vector with a $1$ in slot $i$ and zeros elsewhere, so that $\{ \vec f_i \}_{i=1}^q$ is the standard basis of $\mathbb F^q$.

 With this notation we observe that the first column of $M$ is $M \vec f_1$, so asking if the first column of $M$ is zero is tantamount to asking if the vector $\vec f_1$ belongs to the null space of $M$, i.e., if $M \vec f_1 = \vec 0$.  Table \ref{tab:column-null} gives further illustration of this interplay.

  \section{RREF is unique.}
 En route to proving the uniqueness of RREF, we state a lemma which, essentially, asserts that the matrix $E$ comprises the columns of the matrix $M$ written in the Gauche basis of the column space of $M$.
 
 \begin{lemma} \label{heuristic} Let $M$ be a  $p \times q$ matrix over a field $\mathbb F$ and let $E$ be a matrix in RREF which is row equivalent to $M$. 
Let $S \subseteq  \{ 1, \ldots , q \}$ be the index set corresponding to the pivot vectors among the columns of $E$.  Then:
 
 \begin{itemize}
 \item The columns of $M$ corresponding to the index set $S$ form the Gauche basis for the column space of $M$.
 \item Each nonpivot column $\vec c$ of $E$ is a linear  combination of the pivot columns to its left. This combination  exhibits the presentation of the corresponding column of $M$ as a linear combination of Gauche basis vectors to its left. The ``top" entries of $\vec c$   encode this (unique) linear combination, and the rest of the entries of $\vec c$ are ``padded" zeros.
 \end{itemize}
 \end{lemma}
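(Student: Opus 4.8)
The plan is to run the whole argument off a single engine. Because $M$ and $E$ are row equivalent and row operations are invertible, $M\vec x=\vec 0$ if and only if $E\vec x=\vec 0$; writing $\{\vec f_i\}_{i=1}^q$ for the standard basis of $\mathbb F^q$ as before, the assertion that column $n$ of $M$ equals $\sum_{j\in A}c_j(\text{column }j\text{ of }M)$ for some $A\subseteq\{1,\dots,q\}\setminus\{n\}$ and scalars $c_j$ is exactly the assertion that $M\bigl(\vec f_n-\sum_{j\in A}c_j\vec f_j\bigr)=\vec 0$, so it transfers verbatim, \emph{with the same coefficients}, to the statement that column $n$ of $E$ equals $\sum_{j\in A}c_j(\text{column }j\text{ of }E)$. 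In particular, column $n$ of $M$ lies in the span of a given set of earlier columns of $M$ exactly when column $n$ of $E$ lies in the span of the corresponding earlier columns of $E$.

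Next I would record the shape of $E$ as an RREF matrix. By the \emph{downright} condition and \emph{bottom zeros}, the nonzero rows of $E$ are rows $1,\dots,r$, with leading $1$'s in columns $c(1)<c(2)<\cdots<c(r)$, and by \emph{pivot insecurity} the $i$th pivot column, namely column $c(i)$, equals $\vec e_i$. If column $n$ of $E$ is not a pivot column, its only possible nonzero entries lie in rows $i$ whose pivot $c(i)$ is to the left of $n$; so, writing $k$ for the number of pivots to the left of column $n$, we get $(\text{column }n\text{ of }E)=\sum_{i=1}^{k}E_{in}\vec e_i=\sum_{i=1}^{k}E_{in}(\text{pivot column }c(i)\text{ of }E)$ with $E_{in}=0$ for $i>k$. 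Thus, purely as a statement about $E$, every nonpivot column is visibly the combination of the pivot columns to its left whose coefficients are its ``top'' entries, the remaining entries being padded zeros.

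Then I would match the Gauche bookkeeping on $M$ with the pivot bookkeeping on $E$ by a left-to-right induction on $n$. Suppose the keeper positions among the first $n-1$ columns of $M$ coincide with the pivot positions of $E$ among those columns. The Gauche procedure declares column $n$ of $M$ a keeper precisely when it is not in the span of the keeper columns of $M$ to its left; by the inductive hypothesis those keepers occupy the pivot positions $c(1),\dots,c(k)$, so by the engine above this happens precisely when column $n$ of $E$ is not in the span of $\vec e_1,\dots,\vec e_k$. If column $n$ of $E$ is a pivot column it equals $\vec e_{k+1}$, which is not such a combination, so column $n$ of $M$ is a keeper; if column $n$ of $E$ is a nonpivot column it is such a combination by the previous paragraph, so column $n$ of $M$ is subordinate. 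This closes the induction and shows the keepers of $M$ occupy exactly the positions in $S$; since the keeper set is linearly independent (noted when the algorithm was introduced) and spans the column space of $M$ (every subordinate column being a combination of keepers), it is the Gauche basis, which is the first bullet.

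Finally, for a nonpivot column $n$ of $E$ I would push the relation $(\text{column }n\text{ of }E)=\sum_{i=1}^{k}E_{in}(\text{pivot column }c(i)\text{ of }E)$ back through the engine: with the same coefficients, $(\text{column }n\text{ of }M)=\sum_{i=1}^{k}E_{in}(\text{keeper column }c(i)\text{ of }M)$, and by the previous paragraph the keeper columns $c(1),\dots,c(k)$ of $M$ are exactly the Gauche basis vectors to the left of column $n$. Hence column $n$ of $E$ records the coordinates of column $n$ of $M$ in the Gauche basis, its top entries being the (unique, by independence) coefficients on the Gauche vectors to its left and its lower entries padded zeros; this is precisely the journal vector $\vec J_n$ produced by the algorithm, which is the second bullet. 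I expect the only real friction to be notational, keeping the three families of indices --- columns of $M$, pivot rows, pivot columns --- straight through the induction, rather than anything conceptually deep, since once the ``same coefficients'' engine and the one-line description of the columns of $E$ are in hand, the matching is forced.
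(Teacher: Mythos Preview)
Your argument is correct and runs on exactly the same engine as the paper's: row-equivalent matrices share a null space, so every linear relation among the columns of $M$ transfers with identical coefficients to the columns of $E$, which forces the Gauche keepers of $M$ to sit in the pivot positions of $E$ and the nonpivot columns of $E$ to record the Gauche coordinates. The paper packages this transfer as a ``heuristic principle'' via the column-property/null-space dictionary and leaves the left-to-right iteration implicit, whereas you spell out the induction and the explicit form of the nonpivot columns of $E$; the substance is the same.
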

 Note that if $E$ has no pivots at all, then $M=E=0$, which is consistent with the vacuous interpretation of the statement of the lemma. For the matrix $J$ in \eqref{eq:rref_of_5th_column_matrix}, which is an instance of $E$, the set $S$ is $\{ 1,2,5\}$.

 \begin{proof}
 It is well known that if $M$ and $E$ are row equivalent then the associated homogeneous linear systems $M \vec x = \vec 0$ and $E \vec x = \vec 0$ have the same solutions \cite[REMES]{beez20}, \cite[Theorem 3, p. 8]{hoffman-kunze}. That is, $M$ and $E$ have the same (right) null space.  At the risk of slightly abusing language we state a heuristic principle:
\newline

\framebox{ \begin{minipage}{10cm}
\begin{quoting} \centering
 \emph{Every linear property of the columns of $M$ \\ is also enjoyed by the columns of $E$, \\ and conversely.}
 \end{quoting}
 \end{minipage}}
 \newline
 
 This assertion requires some reflection and interpretation. It is inspired, in part, by a deep principle in the analysis of  meromorphic functions \cite{Zalc75}. (See \cite{Radjavi-rosenthal-1997} for a heuristic principle in the context of linear algebra.) Table \ref{tab:column-null} provides illustrations of this heuristic for $M$, and we can do the same for $E$. (Although we captioned the table as a dictionary, we have taken liberties with the language inside; we hope that this is forgivable.)
 
 \begin{table} 
 %
 %
 %
 
\centering
\caption{(column property)$\leftrightarrow$(null space property) Dictionary.} \label{tab:column-null} 
\small
\begin{tabular}{|p{5.8cm}|p{5.9cm}|} 
\hline \textbf{Linear Property of Columns} & \textbf{Inclusion Property of Null Space}\\ & \\ \hline
The first column of $M$ is nonzero.
  & The vector $\vec f_1$ is not in $\textrm{Null}(M)$. \\ 
\hline 
The $k$th column of $M$ is in the span of columns $j_1, \ldots j_\ell$ of $M$. 
& 
\vtop{\hbox{ There exist \, $\alpha_1 , \ldots ,\alpha_\ell$ \, so that}
\hbox{
  $\alpha_1 \vec f_{j_1} + \cdots + \alpha_\ell \vec f_{j_\ell} - \vec f_k \, \in \textrm{Null}(M)$. }}
 \\
\hline 
\vtop{\hbox{Columns $j_1, \ldots , j_\ell$ of $M$} 
\hbox{form a linearly independent set.}} &
\vtop{ \hbox{\strut  For scalars $\alpha_1 , \ldots , \alpha_\ell$ }
\hbox{\strut  the vector $\alpha_1 \vec f_{j_1} + \cdots + \alpha_\ell \vec f_{j_\ell}$ is in $\textrm{Null}(M)$}
\hbox{\strut $\Leftrightarrow$  the scalars $\alpha_1 , \ldots , \alpha_\ell$ all vanish.}} \\
\hline
\end{tabular} 
\end{table}

Iterating the idea, we can express in this null space way the statement 
\begin{quoting} \centering
{ \small
\emph{Columns $j_1, \ldots, j_\ell$ form the  Gauche basis of the column space of $( \cdot )$.} 
}
\end{quoting}
  and others like it. Indeed, in this way, all assertions in the statement of the lemma may be translated into assertions about inclusions in the respective null spaces. Hence these are shared values \cite{Zalc00} for $E$ and $M$.
 \end{proof}

 \begin{theorem} Let $M$ be a matrix. Then there is one and only one matrix $E$ in RREF that is row equivalent to $M$.
 \end{theorem}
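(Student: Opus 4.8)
The plan is to harvest the two lemmas already in hand. Existence is the easy half: classical Gauss--Jordan elimination applied to $M$ terminates in a matrix satisfying all four RREF conditions and row equivalent to $M$ by construction; alternatively, once we verify (as promised in the next section) that the Gauche procedure outputs a matrix $E$ row equivalent to $M$, the first lemma already exhibits such an $E$. Either way the collection of RREF matrices row equivalent to $M$ is nonempty, so only uniqueness remains.

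For uniqueness, let $E$ be \emph{any} matrix in RREF that is row equivalent to $M$, and let $S = \{\, j_1 < \cdots < j_r \,\} \subseteq \{1,\ldots,q\}$ be its set of pivot column indices. By Lemma~\ref{heuristic}, $S$ is not an artifact of the particular $E$: the columns of $M$ indexed by $S$ are exactly the Gauche basis of the column space of $M$, which depends on $M$ alone. Hence any two RREF matrices row equivalent to $M$ have pivots in the same columns. The RREF conditions then pin down the pivot columns themselves: by \emph{pivots} and \emph{pivot column insecurity} a pivot column has a single nonzero entry, a $1$, and by the \emph{downright} condition the $m$th pivot (counting from the left) carries its $1$ in row $m$; so the columns of $E$ indexed by $j_1,\ldots,j_r$ are $\vec e_1,\ldots,\vec e_r$, in that order.

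Finally, take a nonpivot column $\vec c$ of $E$, say column $k$, with exactly the pivots $j_1 < \cdots < j_m$ lying to its left. The second bullet of Lemma~\ref{heuristic} says $\vec c = \alpha_1 \vec e_1 + \cdots + \alpha_m \vec e_m$, where the $\alpha_i$ are the (unique) coefficients presenting the $k$th column of $M$ as a combination of the Gauche basis vectors to its left; thus entries $1,\ldots,m$ of $\vec c$ are the $\alpha_i$ and entries $m+1,\ldots,p$ are zero. Which Gauche vectors lie to the left of column $k$, and the coefficients $\alpha_i$ (unique, since a basis representation is unique), are both functions of $M$ only. So $\vec c$ is determined by $M$; combined with the previous paragraph, every column of $E$ is determined by $M$. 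Therefore any two RREF matrices row equivalent to $M$ coincide, which with existence proves the theorem.

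As for the main obstacle: essentially all of the content has been front-loaded into Lemma~\ref{heuristic} and the fact that row-equivalent matrices share a null space, hence share every linear property of their columns; the theorem itself is bookkeeping. The one spot that needs care is aligning the abstract phrasing of Lemma~\ref{heuristic} (``the top entries encode the combination'') with the concrete claim that the pivot columns appear as $\vec e_1,\ldots,\vec e_r$ in order, so that ``top'' genuinely means ``the pivot rows, which are the first $r$ rows.'' I expect nothing harder than that alignment.
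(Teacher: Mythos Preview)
Your proof is correct and follows the same approach as the paper: invoke Lemma~\ref{heuristic} to see that every entry of any RREF $E$ row equivalent to $M$ is determined by $M$ alone, and handle existence via Gauss--Jordan (or, forward-referencing, Proposition~\ref{gauche_is_row_equivalent}). The paper compresses all of your column-by-column bookkeeping into the single sentence ``the lemma above describes every entry of $E$ in terms of left-down conventions and properties of $M$,'' so your version is simply a more explicit unpacking of the same argument.
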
 
 \begin{proof}
The lemma above describes every entry of $E$ in terms of left-down conventions and properties of $M$, without reference to any process for row reducing $M$ to yield $E$, e.g., Gauss--Jordan elimination. This proves uniqueness.  For existence, one can invoke the Gauss--Jordan algorithm, or prove directly (and, admittedly, with Gauss--Jordan-esque ideas) that $E$ is row equivalent to $M$, as is done independently, below in Proposition~\ref{gauche_is_row_equivalent}. \end{proof}

\begin{corollary}
The null space of a matrix $M$ determines the RREF and the row space of $M$. Hence if two matrices  of the same size have the same null space, then they are row equivalent.
\end{corollary}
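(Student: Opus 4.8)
The plan is to read the corollary off Lemma~\ref{heuristic} and the preceding theorem with essentially no further computation. The first step is to observe that the proof of Lemma~\ref{heuristic} already extracts, from $\nullspace(M)$ alone, all the data needed to rebuild $E=\mathrm{RREF}(M)$: the index set $S$ of pivot columns, and, for each nonpivot column, the unique scalars expressing it over the Gauche basis of the column space. Indeed, ``columns $j_1,\ldots,j_\ell$ form the Gauche basis'' and ``$\alpha_1,\ldots,\alpha_\ell$ are the coefficients of column $k$ in that basis'' are, by iterating the entries of Table~\ref{tab:column-null}, statements purely about which vectors $\alpha_1\vec f_{j_1}+\cdots+\alpha_\ell\vec f_{j_\ell}-\vec f_k$ lie in $\nullspace(M)$. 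Since $E$ is then assembled from $S$, those coefficients (placed as the ``top'' entries of each nonpivot column), the forced padding zeros, and the fixed down-right conventions, it follows that $\nullspace(M)$ — together with the number of rows of $M$, which merely fixes how many all-zero rows to append — determines $E$. Hence $\nullspace(M)$ determines $\mathrm{RREF}(M)$.

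For the row space, I would invoke the standard fact that each elementary row operation leaves the row space of a matrix unchanged (just as with the null-space invariance cited in the proof of Lemma~\ref{heuristic}); hence the row space of $M$ equals the row space of $\mathrm{RREF}(M)$, which by the previous paragraph is a function of $\nullspace(M)$. This is the orthogonality-free route advertised in the introduction: we never assert that $\nullspace(M)$ and the row space of $M$ are orthogonal complements, only that both are carried unchanged through row reduction.

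For the final clause, suppose $M_1$ and $M_2$ have the same size and $\nullspace(M_1)=\nullspace(M_2)$. By the first step they have the same RREF; call it $E$. By the existence half of the preceding theorem (constructively, via the Gauche procedure of Proposition~\ref{gauche_is_row_equivalent}), each $M_i$ is row equivalent to $E$, and since row equivalence is symmetric and transitive, $M_1$ is row equivalent to $M_2$.

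The main obstacle is entirely contained in the first step: one must be honest that the \emph{whole} of $E$ — not merely the location of its pivots but the actual entries of its nonpivot columns — is a function of $\nullspace(M)$, which is exactly the content of Lemma~\ref{heuristic} and amounts to iterating the dictionary of Table~\ref{tab:column-null}. Everything else (row-space invariance, and the equivalence-relation bookkeeping in the last clause) is routine. It is worth flagging that the ``same size'' hypothesis in the last sentence is not decorative: two matrices can share a null space and yet have different numbers of rows — for instance the $1\times1$ matrix $(1)$ and the $2\times1$ matrix $(1,0)^t$ both have null space $\{\vec 0\}\subseteq\mathbb F^1$ — in which case row equivalence fails on size grounds alone.
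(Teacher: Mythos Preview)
Your proposal is correct and follows essentially the same route as the paper: both arguments rest on the observation that the Gauche construction of $\mathrm{RREF}(M)$ uses only $\nullspace(M)$, so equal null spaces force equal RREFs and hence row equivalence. The paper compresses this into a single sentence, whereas you spell out the row-space step (via invariance of row space under row operations), the transitivity bookkeeping, and the necessity of the ``same size'' hypothesis; these elaborations are sound but do not change the underlying strategy.
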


\begin{proof}

The matrix $M$ has a unique RREF and its Gauche construction uses only the null space of $M$.
\end{proof}
The relation between the null space and the row space of a matrix is well known and does not require the concept of orthogonality.  This is mentioned repeatedly in \cite{hoffman-kunze}, at times concretely in examples, at times in generality, but in passing. It is worthy of further promulgation.

\section{Geometric User Inferface (GUI)}
We take heed of \cite{shifrin-adams} and affirm that, while linear algebra is algebraic, it is geometric as well. Thus the uniqueness of RREF, expressed algebraically above, may be viewed geometrically as well.  The Gauche path to the RREF of a matrix  e.g., $T$, presents the null space of the original matrix ($T$) as a \emph{graph} over the vector subspace spanned by ``axes" corresponding to the subordinate, or nonpivot columns of $T$. We can read \eqref{eq:rref_of_5th_column_matrix} to say that the null space of $T$ is the graph over the span of the third and  fourth axes of $\mathbb R^5$ given by the relations \\

\begin{minipage}{0.25\textwidth} 
{\large \[  \systeme*{
  x_1 =  -3x_3+2x_4 ,
  x_2 = - \,\,\, x_3 \, +3x_4 ,
  x_5 = \phantom{-} 0x_3+0x_4 }
  \, .  \] }
\end{minipage} 
\quad\quad\quad\quad\quad\quad\qquad\qquad
{\begin{minipage}[c]{0.45\textwidth}
\begin{tikzpicture}[x  = {(-0.5cm,-0.5cm)},
                    y  = {(0.9659cm,-0.25882cm)},
                    z  = {(0cm,1cm)},
                    scale = 0.8,
                    color = {black}]

\draw[
draw=black,very thin,line join=round]
 (0,0,1) --
  (1,0,1) --  node  { $\qquad \qquad \qquad  { x_3x_4 \, \,plane} $}
 (3,0,1) -- 
 (3,{3*cos(15)},{3*sin(15)+1}) --
 (0,{3*cos(15)},{3*sin(15)+1}) --cycle ;
 \draw[fill=lightgray,pattern=north east lines, pattern color=gray, 
        draw=black,very thin,line join=round]
 (0,0,2) -- node { $\qquad \qquad \qquad  null (T)$}
 (1,0,2) --
 (3,0,2) --
 (3,{3*cos(15)},{3*sin(15)+2.25}) --
 (0,{3*cos(15)},{3*sin(15)+2.25}) --cycle  ;

\end{tikzpicture} \end{minipage}} \\

Among all the different ways to present the null space of $T$ as a graph (within the Euclidean space with axes corresponding to columns of $T$), the RREF way employs as a base  the span of the ``rightmost" axes available for the task.  Why \emph{rightmost}, the reader may ask, given the Gauche perspective? The RREF exercises \emph{leftmost} selection of pivot columns, making nonpivot columns rightmost. The nonpivot columns of $T$ correspond to free variables for solutions of the linear system $T \vec x = \vec 0$ and the pivot columns correspond to dependent variables. This is tantamount to presenting $\text{null} (T)$ as a graph.

In the article \cite{lay}, D.~C.~Lay points out that vector subspaces of Euclidean space are usually presented as either the locus of solutions to a homogeneous system of linear  equations or the span of a collection of vectors, and offers algorithms to link the two presentation types. All the algorithms involve RREF and may be viewed as presenting the vector subspace as a graph over the rightmost span of axes available.

\section{The solution determines the problem.}

In the television game show \emph{Jeopardy!} contestants are given answers and asked to guess the questions from whence they came. In calculus we introduce anti-derivatives as ``differentiation Jeopardy." The following linear-algebraic Jeopardy variant may be considered:

\smallskip

\framebox{ \begin{minipage}{10cm}
\begin{quoting} \centering 
\emph{If two linear systems have the same solution set, then \\ they are row equivalent.}
\end{quoting}
\end{minipage}}

\smallskip

\smallskip
Literally, as stated, this assertion is manifestly false. (Please do not  invoke it out of context.)  For suppose we have two inconsistent linear systems. They both have the empty set of solutions, hence the same set of solutions. But the two systems may not have the same number of equations. They may even involve different variables. Clearly, we need to focus on consistent linear systems of the same size. We will also tacitly assume that they involve the same unknowns.

\begin{corollary}
If two consistent linear systems of the same size are solution equivalent, then they are row equivalent.
\end{corollary}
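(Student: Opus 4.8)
The plan is to reduce the statement to the earlier Corollary --- that two matrices of the same size with the same null space are row equivalent --- applied not to the coefficient matrices of the two systems but to suitably \emph{augmented} ones. Write the systems as $A\vec x = \vec b$ and $C\vec x = \vec d$, where $A$ and $C$ are $p\times q$ over $\mathbb F$, both systems are consistent, and both have solution set $\Sigma \subseteq \mathbb F^q$. We must show that the augmented matrices $(A\,|\,\vec b)$ and $(C\,|\,\vec d)$ are row equivalent.

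First I would record the affine structure of $\Sigma$. Fixing any $\vec x_0 \in \Sigma$ --- this is where consistency enters --- the solution set of $A\vec x = \vec b$ is the coset $\vec x_0 + \nullspace(A)$, so $\nullspace(A) = \{\, \vec s - \vec x_0 : \vec s \in \Sigma \,\}$; the identical computation for $C$ gives $\nullspace(C) = \{\, \vec s - \vec x_0 : \vec s \in \Sigma \,\}$. Hence $\nullspace(A) = \nullspace(C)$, and by the Corollary $A$ and $C$ are already row equivalent. But that only matches up the coefficient parts; the right-hand sides $\vec b$ and $\vec d$ are not yet accounted for, and closing that gap is the real content.

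To pull $\vec b$ and $\vec d$ into the null-space picture, I would append a column and an unknown. Consider the $p \times (q+1)$ matrices $\widetilde A \equiv (A\,|\,-\vec b)$ and $\widetilde C \equiv (C\,|\,-\vec d)$. A vector $(\vec x, t) \in \mathbb F^{q+1}$ lies in $\nullspace(\widetilde A)$ precisely when $A\vec x = t\,\vec b$, and writing $\vec x = t\,\vec x_0 + (\vec x - t\,\vec x_0)$ shows that $\nullspace(\widetilde A) = \mathrm{span}\{(\vec x_0, 1)\} + \bigl(\nullspace(A) \times \{0\}\bigr)$. By the previous paragraph the right-hand side depends only on $\Sigma$, so $\nullspace(\widetilde A) = \nullspace(\widetilde C)$; the Corollary then yields that $\widetilde A$ and $\widetilde C$ are row equivalent.

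Finally I would transport this back across the harmless sign change. If a product $P$ of elementary matrices carries $\widetilde A$ to $\widetilde C$, then $PA = C$ and $P(-\vec b) = -\vec d$, whence $P\vec b = \vec d$, so the \emph{same} $P$ carries $(A\,|\,\vec b)$ to $(C\,|\,\vec d)$; the two systems are therefore row equivalent. The main obstacle --- indeed the only step requiring an idea --- is the augmentation in the third paragraph: seeing that ``same solution set'' for the inhomogeneous pair is \emph{equivalent} to ``same null space'' for an auxiliary homogeneous pair of the same size, which is exactly the hypothesis the earlier Corollary wants. The rest is coset bookkeeping together with the observation that row operations are indifferent to which column is designated the right-hand side.
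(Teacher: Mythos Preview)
Your proof is correct. The paper shares your starting point --- recover $\nullspace(A)=\nullspace(C)$ from the coset structure of $\Sigma$ --- but then stops, simply noting the coset decomposition and leaving the right-hand sides implicit. You go further by homogenizing: you form $\widetilde A=(A\,|\,-\vec b)$ and $\widetilde C=(C\,|\,-\vec d)$, identify their common null space as $\mathrm{span}\{(\vec x_0,1)\}\oplus(\nullspace(A)\times\{0\})$, and apply the earlier Corollary \emph{once} to these augmented matrices. This is clean and fully rigorous. A shorter finish, closer in spirit to what the paper presumably intends but does not write out: once you have $PA=C$ for some invertible $P$ (from $\nullspace(A)=\nullspace(C)$), the shared particular solution $\vec x_0$ gives $P\vec b = PA\vec x_0 = C\vec x_0 = \vec d$ immediately, so $P(A\,|\,\vec b)=(C\,|\,\vec d)$ without the augmentation step. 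Your route trades that one-line computation for a second, larger invocation of the Corollary; both work, and yours has the virtue of making the role of the augmented system explicit rather than tacit.
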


\begin{proof}
First assume that the systems are homogeneous. Then the hypothesis says that the corresponding matrices have the same null space. Hence, by the previous corollary, they have the same RREF and are thereby row equivalent. In the general case, simply note that the solution set of a (possibly) inhomogeneous linear system consists of one particular solution added to the solution space of the associated homogeneous system.
\end{proof}

\section{An Existential Question.}

The Gauche procedure takes a matrix $M$ and associates with it a matrix $E$ that is in RREF. But how do we know that there exists a sequence of row operations taking $M$ to $E$, i.e., why is $E$ row equivalent to $M$?  
\smallskip

We can invoke the Gauss--Jordan elimination algorithm which yields a matrix in RREF that is row equivalent to $M$ and then cite uniqueness considerations to conclude that our Gauche $E$ must be that matrix. But this is unsatisfying---we should be able to show directly  that the Gauche-produced matrix $E$ is row-equivalent to $M$ and, if one insists, we can.

\begin{prop} \label{gauche_is_row_equivalent}
For a matrix $M$, the Gauche-produced RREF matrix $E \equiv E(M)$ is row equivalent to $M$.
\end{prop}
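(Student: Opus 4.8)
The plan is to imitate the Gauche sweep itself, but this time carrying out honest row operations, so that after column $n$ has been inspected the working matrix already agrees with $E$ in its first $n$ columns. Precisely, I would prove by induction on $n$ the following invariant: there is a matrix $M_n$, obtained from $M$ by a sequence of row operations, whose first $n$ columns are exactly the first $n$ columns of $E$ --- the keeper columns among them being the standard vectors $\vec e_1, \ldots, \vec e_\ell$ (with $\ell$ the number of keepers so far) and the subordinate and zero columns among them being the corresponding journal vectors $\vec J_\bullet$.

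The base case is $M_0 = M$. For the inductive step, consider column $n+1$ of $M_n$. Since each row operation is an invertible linear map applied to the columns, every linear relation among the columns of $M$ persists, with the same coefficients, among the columns of $M_n$; in particular column $n+1$ of $M_n$ is the same combination of the keeper columns to its left as column $n+1$ of $M$ is of its keeper columns. If column $n+1$ is subordinate, say $\sum_i \alpha_i \vec k_i$ in $M$, then in $M_n$ the prior keeper columns are $\vec e_1, \ldots, \vec e_\ell$, so column $n+1$ of $M_n$ already equals $\sum_i \alpha_i \vec e_i = \vec J_{n+1}$, and we set $M_{n+1} = M_n$ with no operation performed. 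If column $n+1$ is a keeper, then it is not in $\mathrm{span}(\vec e_1, \ldots, \vec e_\ell)$, so it has a nonzero entry in some row $s \geq \ell + 1$; swap rows $s$ and $\ell+1$, scale the new row $\ell+1$ to put a $1$ in position $(\ell+1, n+1)$, and subtract suitable multiples of row $\ell+1$ from the other rows to clear the rest of column $n+1$, turning it into $\vec e_{\ell+1}$. Let $M_{n+1}$ be the result.

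The step I expect to require the most care --- the main obstacle --- is checking that none of these operations disturbs columns $1, \ldots, n$. The key observation is that the invariant forces row $\ell+1$, and indeed every row below it, of $M_n$ to be identically zero across columns $1, \ldots, n$: among those columns the keepers are $\vec e_1, \ldots, \vec e_\ell$ and the others are combinations $\sum_i \alpha_i \vec e_i$, all of which vanish below row $\ell$. Consequently, swapping two rows numbered $\geq \ell+1$, scaling such a row, and adding such a row to any other row each act as the identity on columns $1, \ldots, n$. This closes the induction, and after column $q$ is processed we have $M_q = E$, which is therefore row equivalent to $M$ by construction.

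A slicker but less constructive alternative, which I would at most mention in passing: the keeper columns $\vec k_1, \ldots, \vec k_r$ are linearly independent, so extend them to a basis of $\mathbb F^p$ and let $P$ be the invertible matrix carrying $\vec k_i$ to $\vec e_i$ and the extra basis vectors to $\vec e_{r+1}, \ldots, \vec e_p$; the same bookkeeping of linear relations shows $PM = E$ column by column, and left multiplication by an invertible matrix realizes a sequence of row operations. I would keep the explicit sweep as the main proof, since it is the ``Gauss--Jordan-esque'' argument the text promises.
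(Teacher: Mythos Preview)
Your main argument is correct and follows the same Gauss--Jordan-esque route as the paper: sweep columns left to right, use swap/scale/workhorse operations to turn each keeper column into the next $\vec e_{\ell+1}$, and observe that subordinate columns need no processing since linear relations among columns are preserved by row operations. Your version is more carefully packaged as an explicit column-by-column induction with a stated invariant, and you spell out the point (left largely implicit in the paper) that rows below index $\ell$ vanish across columns $1,\ldots,n$, so the new operations cannot disturb them; the invertible-$P$ alternative you mention is sound but not the path the paper takes.
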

\begin{proof}

We can take $M$ and row reduce it to yield the Gauche-produced matrix $E$ following the  algorithm illustrated below:
\[ 
\tiny
\begin{split}
\begin{pmatrix}[rrrrrr]
0        & \cdots  0       & *       & * & \cdots & * \\
0        & \cdots  0       & *       & * & \cdots & * \\
 \vdots & \vdots  \vdots & \vdots & * & \cdots & * \\
0        & \cdots  0       & \ne 0  & * & \cdots & * \\
 \vdots & \vdots  \vdots & \vdots & * & \cdots & * \\
0        & \cdots  0       & *       & * & \ldots & * \\
\end{pmatrix}
\longrightarrow
\begin{pmatrix}[rrrrr]
0        & \ldots  0       & \ne 0  & \ldots & * \\
0        & \ldots  0       & *       & \ldots & * \\
 \vdots & \vdots  \vdots & \vdots & \ldots & * \\
0        & \ldots  0       &  *      & \ldots & * \\
 \vdots & \vdots  \vdots & \vdots & \ldots & * \\
0        & \ldots  0       & *        & \ldots & * \\
\end{pmatrix}
\\
\longrightarrow 
\begin{pmatrix}[rrrrr]
0        & \ldots  0       & 1       & \ldots & * \\
0        & \ldots  0       & *       & \ldots & * \\
 \vdots & \vdots  \vdots & \vdots & \ldots & * \\
0        & \ldots  0       &  *      & \ldots & * \\
 \vdots & \vdots  \vdots & \vdots & \ldots & * \\
0        & \ldots  0       & *        & \ldots & * \\
\end{pmatrix}
\longrightarrow
\begin{pmatrix}[rrrrr]
0        & \ldots  0       & 1       & \ldots & * \\
0        & \ldots  0       & 0       & \ldots & * \\
 \vdots & \vdots  \vdots & \vdots & \ldots & * \\
0        & \ldots  0       &  0      & \ldots & * \\
 \vdots & \vdots  \vdots & \vdots & \ldots & * \\
0        & \ldots  0       & 0       & \ldots & * \\
\end{pmatrix}
\longrightarrow \cdots
.   \end{split}
\]

If $M$ is the zero matrix, then $E=M$ and we are done. Otherwise, $E$ has a first pivot column, which corresponds to the first nonzero column of $M$, say column $j_1$. Taking $M$ and permuting rows, we obtain a matrix whose first nonzero column is number $j_1$, and which has a nonzero entry in the first slot; after scaling the first row we can assume that this entry is $1$. Subtracting scalar multiples of the first row from each of the other rows, i.e., employing workhorse row operations, we obtain a matrix whose first nonzero column is the $j_1$st, with entries equal to those of $\vec e_1$. If $E$ has no other pivot columns, then all later columns are scalar multiples of the $j_1$st, and we are done. If $E$ has a second pivot column, say in slot $j_2$, then this column must have a nonzero entry below the first pivot. Permuting rows other than the first and then applying workhorse-type operations and rescaling the top nonzero entry in this column, we obtain $\vec e_2$ in the $j_2$nd slot while retaining $\vec e_1$ in the first slot. Continuing this way, we produce row operations that place appropriate canonical vectors of the form $\vec e_\ell$ in each of the pivot slots. Each of the nonpivot columns is a linear combination of the pivot columns to its left, and requires no additional ``processing" by row operations. Thus we have exhibited $E\equiv E(M)$ as the result of a sequence of row operations applied to $M$.
\end{proof}

\section{Reflections on Teaching.}

The method of elimination via row reduction may be introduced at the very start of a course on linear algebra. Taking the Gauche approach to echelon form, we are led naturally, directly, and concretely to the notions of \emph{linear combination}, \emph{span}, and \emph{linear independence}. Definition and application are threaded---no need for a separate introduction with rationale for use. This brings to mind a parallel in a \emph{Math Proof} course. Every such course covers Euclid's proof of the infinitude of primes, and rightly so. But we can also add H.~Furstenberg's ``topological" proof \cite{Aign18, Furst55}.   Fursternberg's proof leads directly to the basic set operations of intersection, union and complement. Here too, definition and application are threaded and allied; motivation is built in. True, a direct reading of Furstenberg's proof does require some familiarity with topology, possibly turning the motivation upside down. And there is a variant of Furstenberg's proof that does not require topological notions: \cite{Mercer2009}. Then again, the topological aspect of the proof may be regarded as a teaching feature, not a bug, anticipating notions to come in later courses. Also, this proof requires no theorems in \emph{topology}, but only the definition of the term. The challenge, then, is to introduce the concept of open set in a brief, self contained, pedagogically sound manner, so as to pave the way for Fursternberg's proof early in a proofs course. Here we have tried to address the linear algebraic analogy, which is easier.
\smallskip

We conclude with a question: Is there a \emph{book proof} (see \cite{Aign18}) of the uniqueness of RREF? Is the fact worthy of inclusion in \emph{The Book}?

\section*{Acknowledgments}
The Gauche idea emerged from a conversation with Professor Gilbert Strang in the fall of 2019 at MIT's \emph{Endicott House}. The author is grateful to Professor Strang for the conversation and for his inspiring writings through the years. He is also grateful to MIT for the invitation to the Endicott House event, and he would happily repeat the experience. In addition, he thanks all who have commented on the paper; their suggestions added value and are much appreciated.

 \end{document}